\newtheorem{theorem}{Theorem}[section]
\newtheorem*{theorem*}{Theorem}
\newtheorem{lemma}[theorem]{Lemma}
\newtheorem{corollary}[theorem]{Corollary}
\theoremstyle{definition}
\newtheorem{definition}[theorem]{Definition}
\newtheorem{example}[theorem]{Example}
\newtheorem{notation}[theorem]{Notation}
\theoremstyle{remark}
\newtheorem{remark}[theorem]{Remark}
\DeclarePairedDelimiter\abs{\lvert}{\rvert}
\DeclarePairedDelimiter\norm{\lVert}{\rVert}
\DeclarePairedDelimiter\paren{(}{)}
\DeclarePairedDelimiter\braces{\lbrace}{\rbrace}
\DeclarePairedDelimiter\inprod{\langle}{\rangle}
\renewcommand{\P}{\mathbb{P}}
\newcommand{\E}{\mathbb{E}}
\newcommand{\R}{\mathbb{R}}
\newcommand{\C}{\mathbb{C}}
\newcommand{\tensor}{\otimes}
\newlength{\dhatheight}
\title{Energy optimization for distributions on the sphere and improvement to the Welch bounds}
\author{Yan Shuo Tan}
\address{Department of Mathematics, University of Michigan, 530 Church Street, Ann Arbor, MI 48109}
\email{yanshuo@umich.edu}
\date{\today}
\thanks{Partially supported by NSF Grant DMS 1265782 and U.S. Air Force Grant FA9550-14-1-0009.}
\begin{document}

\maketitle

\begin{abstract}
	For any Borel probability measure on $\R^n$, we may define a family of eccentricity tensors. This new notion, together with a tensorization trick, allows us to prove an energy minimization property for rotationally invariant probability measures. We use this theory to give a new proof of the Welch bounds, and to improve upon them for collections of real vectors. In addition, we are able to give elementary proofs for two theorems characterizing probability measures optimizing one-parameter families of energy integrals on the sphere. We are also able to explain why a phase transition occurs for optimizers of these two families.
\end{abstract}

\section{Introduction}

Amongst all Borel probability measures $\mu$ on $\R^n$ having the same radial distribution, we seek a minimizer for the energy integral
\begin{align}\label{integral}
I_k(\mu) := \int_{\R^n}\int_{\R^n}\inprod{x,y}^kd\mu(x)d\mu(y).
\end{align}
In this paper, we will introduce a tensorization trick, thereby proving that the integral is minimized by the rotationally invariant measure, $\mu_{rot}$. More precisely, for any integer $k$, we define the \emph{$k$-th eccentricity tensor} of a measure $\mu$. The gap between $I_k(\mu)$ and $I_k(\mu_{rot})$ is then given by the squared Euclidean norm of this tensor. Specializing to Borel probability measures on the sphere, we see that \eqref{integral} is minimized by the uniform measure. Moreover, we may also adapt the proof to obtain an analogous result for the uniform measure on the sphere in $\C^n$.

These facts have several interesting applications, the first of which concerns the well-known Welch bounds in the signal processing literature. Using the complex case of our result, we recover the original Welch bounds, while using the real case, we are able to improve upon them for collections of real vectors. In our opinion, this proof is more illuminating than the existing ones. It shows one view the Welch bounds as saying that the average cross-correlation of signal sets cannot beat that of the uniform distribution.

Next, we are able to obtain new proofs of Bj{\"o}rck's theorem from the 1950s and the recent theorem by Bilyk-Dai-Matzke. These theorems characterize optimizers of two one-parameter families of energy integrals, and were proved using methods from potential theory and spherical harmonics. Our methods have the benefit of being more elementary. Furthermore, our proof scheme for both theorems is very similar, and sheds light on the phase transition phenomenon discussed in \cite{Bilyk2016a}. Indeed, we are able to show why the phase transition occurs, and why it happens for different parameter values for the two families.

The plan of the rest of this paper is as follows. In Section 2, we define the eccentricity tensors and use the tensorization trick to prove the energy minimization property of rotationally invariant measures. In Section 3, we discuss the Welch bounds, show how they may be improved, and present some consequences of this improvement. In Section 4, we show how our results imply the two theorems on energy optimization on the sphere, and discuss their relevance to the phase transition phenomenon.

\section{Eccentricity tensors and the tensorization trick}

In this section, we shall introduce the tensorization trick, define eccentricity tensors, and prove that rotationally invariant measures minimize \eqref{integral}. For notational as well as intuition purposes, however, it is more convenient to work with random vectors than with measures. We hence do so for the rest of this paper, being careful to assert the independence of collections of random vectors where necessary.

The tensorization trick is to write the integral \eqref{integral} as the squared Euclidean norm of the $k$-th moment tensor of $\mu$.

\begin{notation}
	Let $X$ be a random vector in $\R^n$. For any positive integer $k$, let
	\[
	M^k_X := \E{X^{\tensor k}}
	\]
	denote its $k$-th moment tensor if all entries are finite.
\end{notation}

Recall the following fact from linear algebra. For any positive integer $k$, we may identify the $k$-th tensor product $T^k(\R^n) = \R^n\tensor\cdots\tensor\R^n$ with $\R^{n^k}$ by picking as a basis the vectors $\braces{e_{i_1}\tensor e_{i_2}\tensor\cdots\tensor e_{i_k}}_{1\leq i_1,\ldots i_k \leq n}$. With this choice, the Euclidean inner product between any two pure tensors $u_1\tensor\cdots\tensor u_k$ and $v_1\tensor\cdots\tensor v_k$ can be written as
\[
\inprod{u_1\tensor\cdots\tensor u_k,v_1\tensor\cdots\tensor v_k} = \prod_{i=1}^k\inprod{u_i,v_i}.
\]
In particular, for power tensors $u^{\tensor k}$ and $v^{\tensor k}$, we have the formula
\begin{align} \label{eq: tensorizaton trick for power tensors}
\inprod{u^{\tensor k},v^{\tensor k}} = \inprod{u,v}^k.
\end{align}

Now if $X$ and $Y$ are two independent random vectors, we may rewrite the $k$-th moment of their inner product as an inner product between their $k$-th moment tensors. Namely, we have
\begin{align} \label{tensorid1}
\E\paren{\inprod{X,Y}^k} = \E{\inprod{X^{\tensor k},Y^{\tensor k}}} = \inprod*{\E{X^{\tensor k}},\E{Y^{\tensor k}}} = \inprod*{M^k_X,M^k_Y},
\end{align}
where the first equality follows from equation \eqref{eq: tensorizaton trick for power tensors}. For independent copies $X$ and $X'$ of the same random vector having distribution $\mu$, $M^k_X = M^k_{X'}$, so
\begin{align} \label{tensorid2}
I_k(\mu) = \E\paren{\inprod{X,X'}^k} = \norm{M^k_X}^2.
\end{align}
Here and in the rest of this paper, we will use $\norm{\cdot}$ to denote the vector Euclidean norm. No other norms are used, so there should be no risk of confusion.

We next introduce the notion of the rotation symmetrization of a random vector.

\begin{definition}
	For any random vector $X$ in $\R^n$, let $X_{rot}$ denote a random vector that is independent of $X$, has the same radial distribution as $X$, and whose distribution is rotationally invariant (i.e. $QX_{rot} \stackrel{d}{=} X_{rot}$ for all $Q \in O(n)$). We call $X_{rot}$ the \emph{rotational symmetrization} of $X$.
\end{definition}

Comparing the moment tensors of a random vector and those of its rotational symmetrization give rise to what we shall call eccentricity tensors.

\begin{definition}
	Let $X$ be a random vector in $\R^n$ with finite moments of all orders. For any positive integer $k$, define its \emph{$k$-th eccentricity tensor} to be
	\begin{align}
	E^k_X := M^k_X - M^k_{X_{rot}}.
	\end{align} 
\end{definition}

Since $X \stackrel{d}{=} X_{rot}$ if and only if $X$ is rotationally invariant, we see that the eccentricity tensors of $X$ are quantitative measures of how far its distribution is from being rotationally invariant. This interpretation is further supported by the following observation.

\begin{lemma}[Orthogonality] \label{lem: orthogonality}
	Let $X$ be a random vector in $\R^n$ with finite moments of all orders. Its eccentricity tensors are orthogonal to the moment tensors of its rotational symmetrization. In other words, for any positive integer $k$,
	\begin{align} \label{eq: orth 1}
	\inprod{E^k_X, M^k_{X_{rot}}} = 0
	\end{align}
	and
	\begin{align} \label{orthogonality}
	\norm*{M^k_X}^2 = \norm*{M^k_{X_{rot}}}^2 + \norm*{E^k_X}^2.
	\end{align}
\end{lemma}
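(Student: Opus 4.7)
The plan is to derive \eqref{orthogonality} from \eqref{eq: orth 1} by Pythagoras, using $M^k_X = M^k_{X_{rot}} + E^k_X$; so the real work is in \eqref{eq: orth 1}, which unravels to the identity
\[
\inprod{M^k_X, M^k_{X_{rot}}} = \norm{M^k_{X_{rot}}}^2.
\]
Applying the tensorization identity \eqref{tensorid1} to the left side (using the independence of $X$ and $X_{rot}$ guaranteed by the definition of rotational symmetrization) and \eqref{tensorid2} to the right side, this reduces to showing
\[
\E\paren{\inprod{X,X_{rot}}^k} = \E\paren{\inprod{X_{rot},X'_{rot}}^k},
\]
where $X'_{rot}$ is an independent copy of $X_{rot}$.

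To prove this equality, I would condition on the first argument in each expectation and peel off the norm. For any unit vector $u$, rotational invariance of $X_{rot}$ gives $\inprod{u,X_{rot}} \stackrel{d}{=} \inprod{e_1,X_{rot}}$, so the function $u \mapsto \E\inprod{u,X_{rot}}^k$ is constant on the unit sphere; call the common value $c_k$. Writing $X = \norm{X}\cdot\paren{X/\norm{X}}$ and using independence of $X$ and $X_{rot}$, I conclude
\[
\E\paren{\inprod{X,X_{rot}}^k} = \E\sqbracket{\norm{X}^k \E\paren{\inprod{X/\norm{X},X_{rot}}^k \st X}} = \E\norm{X}^k \cdot c_k.
\]
Exactly the same calculation, conditioning on $X_{rot}$ and exploiting rotational invariance of $X'_{rot}$, yields
\[
\E\paren{\inprod{X_{rot},X'_{rot}}^k} = \E\norm{X_{rot}}^k \cdot c_k.
\]

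Since $X$ and $X_{rot}$ share the same radial distribution, $\E\norm{X}^k = \E\norm{X_{rot}}^k$, and the two expectations agree. The main subtlety is not computational but bookkeeping: I need to invoke independence carefully at three places (between $X$ and $X_{rot}$, between $X_{rot}$ and $X'_{rot}$, and implicitly within the rotational invariance argument), and to note that the constant $c_k$ is well-defined precisely because of rotational invariance. With those points handled, \eqref{eq: orth 1} drops out, and \eqref{orthogonality} follows immediately by expanding $\norm{M^k_X}^2 = \norm{M^k_{X_{rot}} + E^k_X}^2$.
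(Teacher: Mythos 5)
Your proof is correct and takes a genuinely different, though related, route from the paper's. The paper introduces an auxiliary random rotation $Q$ drawn from Haar measure on $O(n)$, independent of $X$ and $X_{rot}$, and exploits the adjoint relation $\inprod{X,QX_{rot}} = \inprod{Q^TX,X_{rot}}$ together with the distributional identities $Q^TX \stackrel{d}{=} X_{rot} \stackrel{d}{=} QX_{rot}$ to convert $\E\inprod{X,X_{rot}}^k$ directly into $\E\inprod{X_{rot}',X_{rot}}^k$ in a single chain of equalities. You instead condition on the first argument, peel off the norm, and use rotational invariance of $X_{rot}$ to show that $u \mapsto \E\inprod{u,X_{rot}}^k$ is a constant $c_k$ on the unit sphere, so both quantities reduce to $(\E\norm{X}^k)\cdot c_k$ and $(\E\norm{X_{rot}}^k)\cdot c_k$, which coincide by the shared radial distribution. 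Both arguments ultimately integrate out the angular part and invoke the same radial law; the paper's version is slightly slicker and automatically handles the possibility $X=0$ (your decomposition $X = \norm{X}\cdot (X/\norm{X})$ needs a one-line caveat there, e.g.\ observing both sides vanish), whereas your version is more hands-on and avoids introducing Haar measure on $O(n)$ as a tool. Either path, combined with the Pythagoras step you describe, gives the lemma.
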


\begin{proof}
	Let $Q$ be a random orthogonal matrix chosen according to the Haar measure on $O(n)$. For any fixed vector $v \in \R^n$, $Qv$ is uniformly distributed on the sphere of radius $\norm{v}$, so if $Y$ is any random vector independent of $Q$, applying $Q$ to $Y$ preserves its radial distribution but makes $QY$ rotationally invariant.
	
	Now choose $Q$ to be independent of $X$ and $X_{rot}$. Our previous discussion implies that
	\[
	Q^TX \stackrel{d}{=} X_{rot} \stackrel{d}{=} QX_{rot}.
	\]
	We use this to compute
	\begin{align}
	\E\paren{\inprod{X,X_{rot}}^k} = \E\paren{\inprod{X,QX_{rot}}^k} = \E\paren{\inprod{Q^T X,X_{rot}}^k} = \E\paren{\inprod{X_{rot}',X_{rot}}^k},
	\end{align}
	where $X_{rot}'$ is an independent copy of $X_{rot}$. We may then apply identities \eqref{tensorid1} and \eqref{tensorid2} to rewrite the above equation as
	\begin{align}
	\inprod*{M^k_X,M^k_{X_{rot}}} = \inprod*{M^k_{X_{rot}},M^k_{X_{rot}}}.
	\end{align}
	Subtracting the right hand side from the left hand side gives \eqref{eq: orth 1}, from which \eqref{orthogonality} is an immediate corollary.
\end{proof}

The fact that the integral \eqref{integral} is minimized by rotationally invariant measures is then an easy consequence of the previous lemma. To show that these are the \emph{unique} minimizers, we need further assumptions on our random vectors to ensure that they are determined by their moment tensors. A sufficient condition is that of being subexponential.\footnote{This is a weak condition satisfied by most distributions dealt with in practice. For an in-depth discussion on the properties of subexponential distributions, we refer the reader to \cite{Vershynin2011a}.}

\begin{definition}
	We say that a real random variable $X$ is \emph{subexponential} if it has exponential tail decay, i.e. if there is some $K$ such that for all $t \geq 0$,
	$$
	\P\paren{\abs{X} > t} \leq \exp(1 - t/K).
	$$
	We say that a random vector $X$ in $\R^n$ is subexponential if all its one-dimensional marginals are subexponential. Equivalently, it is subexponential if its radial distribution is subexponential.
\end{definition}

\begin{lemma} \label{lem: subexponential RV determined by moment tensors}
	Let $X$ be a subexponential random vector in $\R^n$. Then the distribution of $X$ is determined by its moment tensors.
\end{lemma}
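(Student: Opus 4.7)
The plan is to reduce the $n$-dimensional moment problem to a family of one-dimensional moment problems via the Cram\'er-Wold device. Recall that the distribution of $X$ is uniquely determined by the family of one-dimensional marginal distributions $\braces*{\inprod{t, X}}_{t \in \R^n}$. It therefore suffices to establish two claims: (a) for each $t \in \R^n$, the distribution of $Y_t := \inprod{t, X}$ is determined by the sequence of moments $\braces*{\E\sqbracket*{Y_t^k}}_{k \geq 1}$; and (b) the moments of $Y_t$ are determined by the moment tensors $\braces*{M^k_X}_{k \geq 1}$ of $X$.

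Claim (b) is essentially immediate from the tensorization machinery already set up in the paper. The identity \eqref{eq: tensorizaton trick for power tensors} gives
\[
\E\sqbracket*{Y_t^k} = \E\sqbracket*{\inprod{t, X}^k} = \E\sqbracket*{\inprod{t^{\tensor k}, X^{\tensor k}}} = \inprod*{t^{\tensor k}, M^k_X},
\]
so each moment of $Y_t$ is an explicit linear functional of a moment tensor of $X$.

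For claim (a), I would use the subexponential hypothesis on $X$ to secure a moment bound strong enough to invoke the classical one-dimensional moment uniqueness theorem. Since $\abs{Y_t} \leq \norm{t}\,\norm{X}$ and the radial distribution of $X$ is subexponential with some parameter $K$, the random variable $Y_t$ inherits a subexponential tail bound with parameter $\norm{t}K$. Integrating the tail in the standard way yields
\[
\E\sqbracket*{\abs{Y_t}^k} \leq e\, k!\, \paren*{\norm{t}K}^k
\]
for every $k$. Hence the moment generating function $s \mapsto \E\sqbracket*{e^{sY_t}}$ has a positive radius of convergence, which is a well-known sufficient condition (equivalently, Carleman's criterion holds) for $Y_t$ to be uniquely determined by its moments. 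Combining (a) and (b) with Cram\'er-Wold then delivers the conclusion.

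I do not expect a genuine obstacle. The only nontrivial ingredient is the passage from a subexponential tail bound to factorial moment growth, and this is a routine computation documented in standard references such as \cite{Vershynin2011a}; everything else is bookkeeping on top of the tensorization identity already at our disposal.
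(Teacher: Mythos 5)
Your argument is correct and follows essentially the same route as the paper: both reduce to one-dimensional projections $\inprod{t,X}$, use the subexponential hypothesis to get moment growth strong enough for determinacy, and use the tensorization identity to express those projection moments in terms of the moment tensors. The only cosmetic difference is that you invoke Cram\'er--Wold together with MGF-based moment determinacy, whereas the paper expands the characteristic function $\E\sqbracket*{e^{it\inprod{X,v}}}$ as a convergent power series in $t$ and then appeals to uniqueness of the characteristic function --- these are the same ingredients packaged in slightly different standard language.
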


\begin{proof}
	By the definition of being subexponential, we have the following moment growth condition \cite{Vershynin2011a}:
	\begin{align} \label{moments}
	\sup_{v \in S^{n-1}}\limsup_{r \to \infty} \frac{\paren*{\E{\abs*{\inprod{X,v}}^r}}^{1/r}}{r} < \infty.
	\end{align}
	Let $\phi_X(v) = \E{e^{i\inprod{X,v}}}$ denote the characteristic function of $X$. The above condition implies that for each $v \in S^{n-1}$, the function $t \mapsto \E{e^{it\inprod{X,v}}}$ can be written as a power series with coefficients $\frac{\E{\inprod{X,v}^r}}{r!}$ \cite{Billingsley1995a}, so $\phi_X(v)$ is determined by the moments $\E{\inprod{X,v}^r}$. By \eqref{tensorid1}, $\E{\inprod{X,v}^r} = \inprod{M_X^r,v^{\tensor r}}$, so these are functions of the moment tensors. Finally, it is a fact from elementary probability that a random vector in $\R^n$ determined by its characteristic function (see exercise 2.36 in \cite{Cnlar2011a}).
\end{proof}

We can thus summarize our results so far in the following theorem.

\begin{theorem} \label{thm: minimization and uniqueness theorem}
	Let $X$ be a random vector in $\R^n$. Then
	\begin{enumerate}
		\item[a)] (Minimization) If $X'$ is an independent copy of $X$, and $X_{rot}, X_{rot}'$ are independent copies of its rotational symmetrization, we have
		\begin{align} \label{minimization}
		\E\paren{\inprod{X,X'}^k} \geq \E\paren{\inprod{X_{rot},X_{rot}'}^k}
		\end{align}
		for any positive integer $k$ so long as $X$ has finite $k$-th moment.
		\item[b)] (Uniqueness) Furthermore, if equality holds in \eqref{minimization} for all $k$ and we assume that $X$ has a subexponential distribution, then $X$ is rotationally invariant.
	\end{enumerate}
\end{theorem}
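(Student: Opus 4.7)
The plan is to obtain both statements essentially as bookkeeping consequences of the machinery already set up, namely the tensorization identities \eqref{tensorid1}, \eqref{tensorid2}, the orthogonality relation \eqref{orthogonality}, and Lemma \ref{lem: subexponential RV determined by moment tensors}. No further nontrivial input should be needed.

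For part (a), I would start by observing that the assumption of a finite $k$-th moment for $X$ is enough to make $M_X^k$ a well-defined tensor in $\R^{n^k}$, and that $X_{rot}$, having the same radial distribution as $X$, also has finite $k$-th moment, so $M_{X_{rot}}^k$ and the eccentricity tensor $E_X^k$ are defined as well. Then I would apply \eqref{tensorid2} to both sides of the desired inequality to rewrite it as $\norm{M_X^k}^2 \geq \norm{M_{X_{rot}}^k}^2$, and invoke the orthogonality identity \eqref{orthogonality} from Lemma \ref{lem: orthogonality}, which gives $\norm{M_X^k}^2 = \norm{M_{X_{rot}}^k}^2 + \norm{E_X^k}^2$; since $\norm{E_X^k}^2 \geq 0$, the inequality \eqref{minimization} follows immediately.

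For part (b), the equality case of \eqref{minimization} for a particular $k$ corresponds, by the same rewriting, to $\norm{E_X^k}^2 = 0$, hence $M_X^k = M_{X_{rot}}^k$. If equality holds for every $k$, then $X$ and $X_{rot}$ share all of their moment tensors. Since $X$ is assumed subexponential and $X_{rot}$ has the same radial distribution as $X$ (hence is also subexponential by the definition given), Lemma \ref{lem: subexponential RV determined by moment tensors} applies to both, and so the equality of all moment tensors upgrades to equality in distribution $X \stackrel{d}{=} X_{rot}$. But $X_{rot}$ is rotationally invariant by construction, so $X$ itself must be rotationally invariant, completing the proof.

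I do not anticipate a genuine obstacle here: the only point requiring a brief justification is that $X_{rot}$ inherits subexponentiality from $X$ so that Lemma \ref{lem: subexponential RV determined by moment tensors} can be applied to it, which is immediate from the definition of subexponentiality of a random vector as a property of its radial distribution. Everything else is assembling the identities already proved.
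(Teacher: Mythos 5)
Your argument is correct and follows the paper's own proof essentially verbatim: rewrite both sides via the identity $\E\inprod{X,X'}^k = \norm{M^k_X}^2$, invoke the Pythagorean decomposition from Lemma~\ref{lem: orthogonality}, and for uniqueness feed the resulting equality of all moment tensors into Lemma~\ref{lem: subexponential RV determined by moment tensors}. The one small addition you make --- explicitly noting that $X_{rot}$ inherits subexponentiality from $X$ because subexponentiality is a property of the radial distribution --- is a worthwhile clarification that the paper leaves implicit.
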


\begin{proof}
	Using identity \eqref{tensorid2}, we rewrite the first claim as
	\[
	\norm*{M^k_X}^2 \geq \norm*{M^k_{X_{rot}}}^2,
	\]
	and this follows immediately from equation \eqref{orthogonality}.
	
	If equality holds for all positive integers $k$, then by \eqref{orthogonality}, $E^k_X = 0$ for all $k$, implying that $X$ and $X_{rot}$ have the same moment tensors of all orders. If we assume that $X$ is subexponential, Lemma \ref{lem: subexponential RV determined by moment tensors} implies that $X$ and $X_{rot}$ have the same distribution.
\end{proof}

For the remainder of this paper, we specialize to the case of distributions on the sphere.

\begin{corollary} \label{cor: spherical minimization}
	Let $\theta$ have the uniform distribution on the sphere $S^{n-1}$, and let $X$ be any random vector taking values on the sphere. Let $\theta'$ and $X'$ be independent copies of $\theta$ and $X$ respectively. Then
	\begin{align}
	\E\paren{\inprod{X,X'}^{2k}} \geq \E\paren{\inprod{\theta,\theta'}^{2k}} = \frac{1\cdot 3\cdots(2k-1)}{n\cdot(n+2)\cdots(n+2k-2)}
	\end{align}
	for any positive integer $k$. Furthermore, if equality holds for all $k$, $X$ has the uniform distribution.
\end{corollary}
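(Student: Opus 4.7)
The plan is to apply Theorem \ref{thm: minimization and uniqueness theorem} directly, noting that the rotational symmetrization $X_{rot}$ takes a very explicit form on the sphere. Since $X$ takes values on $S^{n-1}$, its radial distribution is the point mass at $1$; by definition, $X_{rot}$ is rotationally invariant with this same radial distribution, and the only such distribution is the uniform measure on $S^{n-1}$. Hence $X_{rot} \stackrel{d}{=} \theta$, and applying part (a) of Theorem \ref{thm: minimization and uniqueness theorem} with exponent $2k$ immediately yields $\E\inprod{X,X'}^{2k} \geq \E\inprod{\theta,\theta'}^{2k}$.

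The remaining step is to evaluate $\E\inprod{\theta,\theta'}^{2k}$ in closed form. By rotational invariance of $\theta$, conditioning on $\theta'$ and applying an orthogonal transformation sending $\theta'$ to $e_1$ shows that $\inprod{\theta,\theta'}$ has the same distribution as the first coordinate $\theta_1$ of a uniform point on the sphere. Parameterizing $\theta = (\theta_1, \sqrt{1 - \theta_1^2}\,\eta)$ with $\eta$ uniform on $S^{n-2}$ gives the density of $\theta_1$ on $[-1, 1]$ as proportional to $(1 - t^2)^{(n-3)/2}$, and the substitution $u = t^2$ reduces the computation to a ratio of Beta functions,
\[
\E\theta_1^{2k} = \frac{B(k + \tfrac12, \tfrac{n-1}{2})}{B(\tfrac12, \tfrac{n-1}{2})} = \frac{\Gamma(k + \tfrac12)\,\Gamma(\tfrac{n}{2})}{\Gamma(\tfrac12)\,\Gamma(k + \tfrac{n}{2})}.
\]
Using $\Gamma(k + \tfrac12)/\Gamma(\tfrac12) = (2k-1)!!/2^k$ and $\Gamma(k + \tfrac{n}{2})/\Gamma(\tfrac{n}{2}) = n(n+2) \cdots (n+2k-2)/2^k$, the powers of $2$ cancel to give the stated closed form.

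For the uniqueness claim, since $S^{n-1}$ is bounded, $X$ is automatically subexponential, so Lemma \ref{lem: subexponential RV determined by moment tensors} together with part (b) of Theorem \ref{thm: minimization and uniqueness theorem} becomes available. I expect this to be the main obstacle: equality in the $2k$-form of the inequality for all $k$ only forces the even eccentricity tensors $E^{2k}_X$ to vanish, yielding $M^{2k}_X = M^{2k}_\theta$ for every $k$, whereas uniqueness via Lemma \ref{lem: subexponential RV determined by moment tensors} requires all moment tensors to match. The odd moment tensors of $\theta$ vanish by the central symmetry $\theta \stackrel{d}{=} -\theta$, so the task reduces to showing $M^{2k+1}_X = 0$. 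I would attempt this by combining the hypothesis with the odd-exponent version of Theorem \ref{thm: minimization and uniqueness theorem}(a), namely $\|M^{2k+1}_X\|^2 = \E\inprod{X,X'}^{2k+1} \geq 0 = \E\inprod{\theta,\theta'}^{2k+1}$, and arguing that the equality hypothesis must in fact extend to odd exponents (for instance, by an auxiliary symmetry argument replacing $X$ by its antipodal average and then unwinding).
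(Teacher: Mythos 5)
Your derivation of the inequality and the closed-form moment formula is correct. Identifying $X_{rot}\stackrel{d}{=}\theta$ and invoking Theorem \ref{thm: minimization and uniqueness theorem}(a) with exponent $2k$ is exactly the paper's approach; your Beta-function computation of $\E\theta_1^{2k}$ is a standard alternative to the paper's Gaussian-integral argument in Lemma \ref{lem: moments of spherical marginals}, and it checks out.

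Your concern about the uniqueness step is well-founded, and in fact it cannot be patched: the uniqueness claim is false under the stated hypothesis. Your proposed strategy---forcing the odd moment tensors to vanish via an ``auxiliary symmetry argument''---cannot succeed because there is a genuine counterexample. Take $X = \sign(\theta_1)\,\theta$, whose law is the uniform measure on the hemisphere $\braces{x\in S^{n-1}: x_1\geq 0}$. Since $\sign(\theta_1)^{2k}=1$ almost surely, we have $X^{\tensor 2k}=\theta^{\tensor 2k}$ a.s., hence $M^{2k}_X=M^{2k}_\theta$ and $\E\paren{\inprod{X,X'}^{2k}}=\E\paren{\inprod{\theta,\theta'}^{2k}}$ for every $k$; yet $X$ is not uniform, since $\E X = (\E\abs{\theta_1})\,e_1 \neq 0$. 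The paper's own assertion that uniqueness ``follows immediately from Theorem \ref{thm: minimization and uniqueness theorem}'' suffers from the same gap you noticed: part (b) of that theorem requires equality for all exponents, odd and even, whereas the corollary's hypothesis only controls the even exponents, and these leave the odd moment tensors completely unconstrained. A correct version of the uniqueness conclusion would either add the hypothesis $\E\paren{\inprod{X,X'}^{2k+1}}=0$ for all $k\geq 0$, or assume in addition that $X$ is centrally symmetric, or weaken the conclusion to say only that $X$ and $\theta$ share all even-order moment tensors (equivalently, that the symmetrization $\tfrac{1}{2}(\mu+\mu^{-})$ of the law $\mu$ of $X$ is uniform, where $\mu^-$ is the pushforward of $\mu$ under the antipodal map).
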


\begin{proof}
	Clearly $\theta \stackrel{d}{=} X_{rot}$, and is subexponential. The inequality and the characterization statement then follows immediately from Theorem \ref{thm: minimization and uniqueness theorem}. By uniformity, we have $\E\paren{\inprod{\theta,\theta'}^{2k}} = \E\paren{\inprod{\theta,v}^{2k}}$ for any unit vector $v \in S^{n-1}$, and the explicit computation for $\E\paren{\inprod{\theta,v}^{2k}}$ is the content of the next lemma.
\end{proof}

\begin{lemma}[Moments of spherical marginals] \label{lem: moments of spherical marginals}
	Let $\theta$ be uniformly distributed on the sphere $S^{n-1}$. Then for any unit vector $v \in S^{n-1}$ and any positive integer $k$, we have
	\begin{align} \label{spheremoment}
	\E\paren{\inprod{\theta,v}^{2k}} = \frac{1\cdot 3\cdots(2k-1)}{n\cdot(n+2)\cdots(n+2k-2)}
	\end{align}
\end{lemma}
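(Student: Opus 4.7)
The plan is to use the standard Gaussian representation of the uniform measure on the sphere. Let $g=(g_1,\ldots,g_n)$ where the $g_i$ are i.i.d.\ standard normals. By rotational invariance of the Gaussian density, $\theta := g/\norm{g}$ is uniformly distributed on $S^{n-1}$, and, crucially, $\theta$ is independent of $\norm{g}$. By the rotational symmetry of the uniform measure, it suffices to compute $\E\paren{\inprod{\theta,v}^{2k}}$ for $v = e_1$.

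The key identity is $g_1 = \norm{g}\cdot \inprod{\theta,e_1}$, so raising to the $2k$-th power and taking expectations, independence gives
\[
\E\sqbracket*{g_1^{2k}} \;=\; \E\sqbracket*{\norm{g}^{2k}}\cdot \E\sqbracket*{\inprod{\theta,e_1}^{2k}}.
\]
The claim will then follow from two classical moment computations: the $2k$-th moment of a standard Gaussian equals $(2k-1)!! = 1\cdot 3\cdots(2k-1)$, and the $k$-th moment of the chi-squared random variable $\norm{g}^2 = g_1^2+\cdots+g_n^2$ equals $n(n+2)\cdots(n+2k-2)$. Dividing yields exactly the right-hand side of \eqref{spheremoment}.

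The Gaussian moment is standard (e.g.\ by repeated integration by parts against the density). The chi-squared moment is the only piece requiring any work; the cleanest route is induction on $k$, writing $\E\sqbracket*{\norm{g}^{2k}} = \sum_{i=1}^n \E\sqbracket*{g_i^2 \norm{g}^{2(k-1)}}$, and using integration by parts in each coordinate to turn the extra factor of $g_i^2$ into a contribution of $(n + 2(k-1))\E\sqbracket*{\norm{g}^{2(k-1)}}$, giving the recursion $\E\sqbracket*{\norm{g}^{2k}} = (n+2k-2)\,\E\sqbracket*{\norm{g}^{2(k-1)}}$.

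There is no real obstacle here; the only mild subtlety is justifying independence of $\norm{g}$ and $g/\norm{g}$, which is immediate from the fact that the joint density of $g$ factors into a radial part and a rotationally invariant angular part in polar coordinates. Everything else is a direct calculation.
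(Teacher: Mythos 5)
Your proof takes essentially the same route as the paper: represent the uniform spherical vector as $g/\norm{g}$ for a standard Gaussian $g$, exploit independence of the radial and angular parts, and factor $\E\sqbracket*{g_1^{2k}} = \E\sqbracket*{\norm{g}^{2k}}\,\E\sqbracket*{\inprod{\theta,v}^{2k}}$, so that the spherical moment is the ratio of two Gaussian quantities. The only difference is in how $\E\sqbracket*{\norm{g}^{2k}}$ is evaluated (the paper computes the radial integral with Gamma functions, whereas you use a Gaussian integration-by-parts recursion), which is an incidental choice in deriving a standard formula.
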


\begin{proof}
	This is a standard calculation that we include for completeness. We shall prove this by computing gaussian integrals. Let $\gamma$ and $g$ denote standard gaussians in 1 dimension and $n$ dimensions respectively. Using the rotational invariance of $g$, we have
	\[
	\E{\gamma^{2k}} = \E\paren{\inprod{g,v}^{2k}} = \E\paren{\inprod{\norm{g}\theta,v}^{2k}} = \E{\norm{g}^{2k}}\E\paren{\inprod{\theta,v}^{2k}}.
	\]
	Rearranging gives
	\[
	\E{\inprod{\theta,v}^{2k}} = \frac{\E{\gamma^{2k}}}{\E{\norm{g}^{2k}}}.
	\]
	We then compute
	\begin{align} \label{gaussiannorm}
	\E{\norm{g}^{2k}} = \frac{\omega_n}{(2\pi n)^{n/2}}\int_0^\infty r^{2k}r^{n-1}e^{-r^2/2}dr,
	\end{align}
	where $\omega_n$ is the volume of the sphere $S^{n-1}$. It is well known that
	\[
	\omega_n = \frac{2\pi^{n/2}}{\Gamma(n/2)},
	\]
	while we also have
	\[
	\int_0^\infty r^{2k}r^{n-1}e^{-r^2/2}dr = 2^{n/2+k-1}\Gamma(n/2+k).
	\]
	Substituting these back into \eqref{gaussiannorm} gives
	\[
	\E{\norm{g}^{2k}} = 2^k\frac{\Gamma(n/2+k)}{\Gamma(n/2)} = n\cdot(n+2)\cdots(n+2k-2).
	\]
	This yields the denominator in \eqref{spheremoment}. A similar calculation for $\E{\gamma^{2k}}$ yields the numerator. 
\end{proof}

\section{Applications to dictionary incoherence and the Welch bounds}

Given a collection of $m$ unit vectors $Z = \braces{z_1,z_2,\ldots,z_m}$ in $\C^n$, we are often interested in the quantity
\[
c_{max} = \max_{i \neq j}\abs*{\inprod*{z_i,z_j}}.
\]
If we think of the vectors as dictionary elements, then $c_{max}$ measures the coherence or maximum cross-correlation of the dictionary. It is well known in the sparse approximation literature that the larger the value of $c_{max}$, the worse the collection $Z$ performs when we try to recover a sparse representation of a vector as a linear combination of the $z_j$'s \cite{Donoho2003}. As such, it is an important question in the design of communication systems to know how well we can do theoretically, and how we may find collections that achieve the theoretical minimum value of $c_{max}$.

In 1974, Welch gave a family of lower bounds on $c_{max}$ in terms of $m$ and $n$.

\begin{theorem}[Welch, 1974 \cite{Welch1974a}]
	Let $Z$ and $c_{max}$ be defined as above. Then for each positive integer $k$, we have
	\begin{align} \label{welch1}
	\paren*{c_{max}}^{2k} \geq \frac{1}{m-1}\paren*{\frac{m}{\binom{n+k-1}{k}}-1}.
	\end{align}
\end{theorem}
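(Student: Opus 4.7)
The plan is to apply the complex analogue of Corollary \ref{cor: spherical minimization} to an appropriate discrete distribution built from the dictionary. Let $X$ be the random vector on the complex unit sphere that takes each value $z_1,\ldots,z_m$ with probability $1/m$, and let $X'$ be an independent copy. Expanding,
\[
\E\abs*{\inprod{X,X'}}^{2k} = \frac{1}{m^2}\sum_{i,j=1}^m\abs*{\inprod{z_i,z_j}}^{2k} \leq \frac{1}{m} + \frac{m-1}{m}\paren*{c_{max}}^{2k},
\]
because the $m$ diagonal terms each equal $1$ and each of the $m(m-1)$ off-diagonal terms is at most $\paren*{c_{max}}^{2k}$.

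On the other side, I would invoke a complex version of Corollary \ref{cor: spherical minimization} to get
\[
\E\abs*{\inprod{X,X'}}^{2k} \geq \E\abs*{\inprod{\theta,\theta'}}^{2k},
\]
where $\theta, \theta'$ are independent uniform random vectors on the complex unit sphere in $\C^n$. A computation analogous to Lemma \ref{lem: moments of spherical marginals}, using that $\abs*{\inprod{\theta,v}}^2$ has a Beta$(1,n-1)$ distribution for any fixed unit $v \in \C^n$, evaluates this quantity to $1/\binom{n+k-1}{k}$. Chaining the two inequalities and rearranging produces the advertised bound.

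The only non-routine step is the complex analogue of the spherical minimization, which the excerpt promises but does not prove in detail. I would establish it by modifying the tensorization trick as follows: write $\abs*{\inprod{X,X'}}^{2k} = \inprod{X,X'}^k\overline{\inprod{X,X'}}^k$ and realize this as a Hermitian inner product of the mixed moment tensors $\E\paren*{X^{\tensor k}\tensor \overline{X}^{\tensor k}}$, so that the analogue of \eqref{tensorid2} continues to hold. The orthogonality argument of Lemma \ref{lem: orthogonality} then transfers verbatim once the Haar measure on $O(n)$ is replaced with that on $U(n)$, since for fixed $v \in \C^n$ and Haar-distributed $U \in U(n)$, the vector $Uv$ is uniformly distributed on the complex sphere of radius $\norm{v}$. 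This is the main obstacle; everything else is bookkeeping.
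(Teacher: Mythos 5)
Your proposal is correct and follows essentially the same route as the paper: you derive the average cross-correlation bound (equation \eqref{welch2}) via the complex tensorization trick, with moment tensors $\E(X^{\tensor k}\tensor\overline{X}^{\tensor k})$ and Haar measure on $U(n)$ replacing $O(n)$, and then separate diagonal from off-diagonal terms to reach \eqref{welch1}. The only cosmetic difference is that you evaluate $\E|\langle\theta,v\rangle|^{2k}$ via the Beta$(1,n-1)$ law of $|\langle\theta,v\rangle|^2$ instead of the paper's complex Gaussian normalization, but both yield $\binom{n+k-1}{k}^{-1}$.
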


Welch proved this theorem by bounding the average cross-correlation (also sometimes called the $p$-frame potential, with $p=2k$ \cite{Ehler2012}).

\begin{lemma}[Welch]
	Let $\braces{z_1,z_2,\ldots,z_m}$ be unit vectors in $\C^n$, then
	\begin{align} \label{welch2}
	\frac{1}{m^2}\sum_{i,j=1}^m \abs{\inprod{z_i,z_j}}^{2k} \geq \binom{n+k-1}{k}^{-1}.
	\end{align}
\end{lemma}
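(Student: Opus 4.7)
The plan is to interpret the left-hand side probabilistically and invoke the complex analog of Corollary \ref{cor: spherical minimization}. Let $X$ be a random vector uniformly distributed on $\braces*{z_1,\ldots,z_m}$ and $X'$ an independent copy. Since each $z_i$ has unit norm, $X$ takes values on the complex unit sphere, and
\[
\frac{1}{m^2}\sum_{i,j=1}^m \abs*{\inprod{z_i,z_j}}^{2k} = \E\abs*{\inprod{X,X'}}^{2k}.
\]

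I would then appeal to the complex analog of our energy minimization result, which yields
\[
\E\abs*{\inprod{X,X'}}^{2k} \geq \E\abs*{\inprod{\theta,\theta'}}^{2k},
\]
where $\theta,\theta'$ are independent uniform samples from the complex unit sphere. The complex version of the tensorization trick is a touch subtler than the real one: working with the sesquilinear inner product, the quantity preserved should be $\tr{M^2}$ for the positive semidefinite operator $M = \E\sqbracket*{X^{\tensor k}(X^{\tensor k})^*}$, which is supported on the symmetric subspace $\mathrm{Sym}^k(\C^n)$, rather than $\norm{\E X^{\tensor k}}^2$. An orthogonal decomposition of $M$ into a uniform part plus a complex eccentricity operator, parallel to Lemma \ref{lem: orthogonality}, then delivers the inequality.

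Finally, the right-hand side can be computed exactly as in Lemma \ref{lem: moments of spherical marginals}. By rotational invariance, $\E\abs{\inprod{\theta,\theta'}}^{2k} = \E\abs{\theta_1}^{2k}$; writing a standard complex gaussian $g \in \C^n$ as $\norm{g}\theta$ with $\norm{g}$ and $\theta$ independent gives
\[
\E\abs{g_1}^{2k} = \E\norm{g}^{2k}\cdot\E\abs{\theta_1}^{2k}.
\]
Since $\abs{g_1}^2$ is standard exponential and $\norm{g}^2$ is Gamma$(n,1)$ distributed, this ratio evaluates to $k!/\paren*{n(n+1)\cdots(n+k-1)} = \binom{n+k-1}{k}^{-1}$, as required.

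The main obstacle is articulating the complex version of the tensorization trick cleanly. Once $\E\abs{\inprod{X,X'}}^{2k}$ is identified with $\tr{M^2}$, the key point is that $M$ has trace $1$ (because $X$ lies on the sphere) and is supported on the $\binom{n+k-1}{k}$-dimensional subspace $\mathrm{Sym}^k(\C^n)$; this is the precise structural reason why the binomial coefficient in the Welch bound takes the value it does.
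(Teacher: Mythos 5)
Your proposal follows the same route as the paper: identify the left-hand side with $\E\abs{\inprod{X,X'}}^{2k}$ for $X$ uniform on $\braces{z_1,\dots,z_m}$, tensorize to $\tr(M^2)$ with $M = \E\sqbracket*{X^{\tensor k}(X^{\tensor k})^*}$ (this Hilbert--Schmidt norm squared is exactly the paper's $\norm{M^{2k}_X}^2$ with $M^{2k}_X = \E{X^{\tensor k}\tensor (X^*)^{\tensor k}}$), invoke the unitary analogue of the orthogonality lemma via Haar measure on $U(n)$, and compute $\E\abs{\inprod{\theta,\theta'}}^{2k}$ through complex Gaussians. All of these steps are correct, and the Gamma/exponential computation of the constant is a clean equivalent of the paper's Lemma 4.

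Worth noting: your closing observation — that $M$ is PSD, has $\tr(M)=1$ because $\norm{X}=1$, and is supported on the $\binom{n+k-1}{k}$-dimensional subspace $\mathrm{Sym}^k(\C^n)$ — already contains a complete proof that bypasses the eccentricity machinery entirely. By Cauchy--Schwarz on that subspace, $1 = (\tr M)^2 \leq \dim\mathrm{Sym}^k(\C^n)\cdot\tr(M^2)$, which is precisely \eqref{welch2}. This is Alon's (and Datta et al.'s) dimension-counting argument, which the paper cites as prior work but does not use. So you have in effect two proofs: the paper's measure-theoretic one via unitary symmetrization, which generalizes to non-uniform radial distributions and yields the characterization of equality cases; and the linear-algebraic shortcut you sketch at the end, which is shorter but specific to this bound. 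Stating the Cauchy--Schwarz step explicitly would make the second route self-contained and make your "precise structural reason" remark a theorem rather than a heuristic.
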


By separating the diagonal terms from the sum and rearranging the summands, it is easy to see how \eqref{welch2} implies \eqref{welch1}. Welch's original proof of \eqref{welch2} was combinatorial in nature. In 2003, Alon \cite{Alon2003} provided a geometric proof based on examining the Gram matrix associated to $Z$ and dimension counting. The proof was reproduced by Datta et al. \cite{Datta2012a} in 2012, who were apparently unaware of the earlier paper.

Both arguments are agnostic to whether the vectors are real or complex, and it is a natural question whether one may improve the bound when we restrict to the case of real vectors. Using the energy minimization property of rotationally invariant distributions, we are able to show that this is indeed the case.

\begin{lemma}
	Let $\braces{x_1,x_2,\ldots,x_m}$ be unit vectors in $\R^n$. Then
	\begin{align} \label{newwelch}
	\frac{1}{m^2}\sum_{i,j=1}^m \abs{\inprod{x_i,x_j}}^{2k} \geq \frac{1\cdot 3\cdots(2k-1)}{n\cdot(n+2)\cdots(n+2k-2)}.
	\end{align}
\end{lemma}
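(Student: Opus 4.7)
The plan is to apply Corollary \ref{cor: spherical minimization} to the empirical distribution associated to the collection $\{x_1,\ldots,x_m\}$. Since the exponent $2k$ is even, we may drop the absolute value and write
\[
\frac{1}{m^2}\sum_{i,j=1}^m \abs{\inprod{x_i,x_j}}^{2k} = \frac{1}{m^2}\sum_{i,j=1}^m \inprod{x_i,x_j}^{2k},
\]
which is the key reduction letting us access our inner-product-power machinery.

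Next, let $X$ be the random vector on $S^{n-1}$ that takes each value $x_i$ with probability $1/m$, and let $X'$ be an independent copy. Then the double sum above is exactly $\E\paren{\inprod{X,X'}^{2k}}$. Applying the spherical minimization corollary (Corollary \ref{cor: spherical minimization}) with this $X$ gives
\[
\E\paren{\inprod{X,X'}^{2k}} \geq \E\paren{\inprod{\theta,\theta'}^{2k}} = \frac{1\cdot 3\cdots(2k-1)}{n\cdot(n+2)\cdots(n+2k-2)},
\]
which is precisely \eqref{newwelch}. No additional estimates are required.

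There is effectively no obstacle here: the lemma is a direct specialization of the general energy minimization theorem to a uniform discrete distribution on the sphere. The only thing worth being careful about is confirming that the empirical measure indeed takes values on $S^{n-1}$ (it does, by hypothesis), and that the parity of the exponent lets us identify the average cross-correlation with the $2k$-th inner product moment so that Corollary \ref{cor: spherical minimization} applies verbatim.
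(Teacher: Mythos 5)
Your proof matches the paper's own argument: both take $X$ uniformly distributed on $\{x_1,\ldots,x_m\}$, identify the average cross-correlation with $\E\bigl(\inprod{X,X'}^{2k}\bigr)$, and invoke Corollary \ref{cor: spherical minimization}. The only cosmetic difference is that you explicitly note the parity argument for dropping the absolute value, which the paper leaves implicit.
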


\begin{remark}
	Since
	\[
	\binom{n+k-1}{k}^{-1} = \frac{1\cdot2\cdots k}{n\cdot(n+1)\cdots (n+k-1)},
	\]
	we see that the new bound \eqref{newwelch} is equal to the old one \eqref{welch2} for $k=1$, and is strictly larger for $k >1$.
\end{remark}

\begin{proof}
	Let $X$ be uniformly distributed on the set $\braces{x_1,x_2,\ldots,x_m}$. Corollary \ref{cor: spherical minimization} applies and we have
	\[
	\E\paren{\inprod{X,X'}^{2k}} \geq \frac{1\cdot 3\cdots(2k-1)}{n\cdot(n+2)\cdots(n+2k-2)}
	\]
	for any positive integer $k$. On the other hand, we also have
	\begin{equation*}
	\E\paren{\inprod{X,X'}^{2k}} = \frac{1}{m^2}\sum_{i,j=1}^m \abs{\inprod{x_i,x_j}}^{2k}. \qedhere
	\end{equation*}
\end{proof}

\begin{remark}
	In an earlier version of this paper, we stated that this result is new. I have since found it stated in \cite{Ehler2012} by Ehler and Okoudjou, who attribute it to Venkov \cite{Venkov}. The proof in \cite{Ehler2012}, however, proceeds via spherical harmonics and not the tensorization machinery we have used here.
\end{remark}

Let us illustrate the improved bound by revisiting an example from \cite{Datta2012a}.

\begin{example}
	Let $x_1,x_2,\ldots, x_7$ be the columns of
	\[
	\begin{bmatrix}
	0.99 & 0.14 & 0.56 & -0.68 & 0.93 & -0.86 & 0.30 \\
	0.08 & 0.99 & 0.83 & 0.73 & -0.36 & -0.50 & 0.95
	\end{bmatrix}.
	\]
	This collection achieves the $k=1$ Welch bound \eqref{welch2}, and its energy\footnote{To compute this value, we renormalized the vectors $x_1,\ldots,x_7$ in order to reduce roundoff error.}
	\[
	\sum_{i,j=1}^7 \abs{\inprod{x_i,x_j}}^6 = 15.3128
	\]
	was experimentally observed to be minimal over all collections of $7$ unit vectors in $\R^2$. However, the $k=3$ Welch bound gives a lower bound of $12.25$ for the energy, so there was a gap between theory and experiment. Using our improved bound \eqref{newwelch}, we get 15.3125, thereby bridging this gap completely.
\end{example}

Although the improved bounds do not hold for complex collections of vectors, we are nonetheless able to recover the original Welch bounds using the same circle of ideas and making a few adjustments.

\begin{definition}
	For any random vector $X$ in $\C^n$, let $X_{uni}$ denote a random vector that is independent of $X$, has the same radial distribution as $X$, and whose distribution is invariant under unitary transformations. We call $X_{uni}$ the \emph{unitary symmetrization} of $X$.
\end{definition}

With this definition, we can state the following complex version of Theorem \ref{thm: minimization and uniqueness theorem}.

\begin{theorem}
	Let $X$ be a random vector in $\C^n$ with finite moments of all orders. Then if $X'$ is an independent copy of $X$, and $X_{uni}, X_{uni}'$ are independent copies of its unitary symmetrization, we have
	\begin{align} \label{min2}
		\E{\abs*{\inprod{X,X'}}^{2k}} \geq \E{\abs*{\inprod{X_{uni},X_{uni}'}}^{2k}}
	\end{align}
	for any positive integer $k$.
\end{theorem}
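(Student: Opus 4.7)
The plan is to mirror the real-case proof of Theorem \ref{thm: minimization and uniqueness theorem}, but with a modified tensorization trick that handles the complex absolute value by pairing a pure tensor with its conjugate. Writing the Hermitian inner product on $\C^n$ as $\inprod{u,v} = \sum_i u_i \bar v_i$, we have $\overline{\inprod{u,v}} = \inprod{\bar u, \bar v}$, so
\[
\abs{\inprod{u,v}}^{2k} = \inprod{u,v}^k \inprod{\bar u,\bar v}^k = \inprod{u^{\tensor k}, v^{\tensor k}} \inprod{\bar u^{\tensor k}, \bar v^{\tensor k}} = \inprod{u^{\tensor k} \tensor \bar u^{\tensor k}, v^{\tensor k} \tensor \bar v^{\tensor k}},
\]
where the last inner product is the Hermitian inner product on $T^{2k}(\C^n) \cong \C^{n^{2k}}$. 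Thus the right object to introduce is the \emph{mixed moment tensor} $\widetilde M^k_X := \E\bigl[X^{\tensor k} \tensor \bar X^{\tensor k}\bigr]$, in terms of which, by independence,
\[
\E\abs{\inprod{X,Y}}^{2k} = \inprod*{\widetilde M^k_X,\widetilde M^k_Y},\qquad \E\abs{\inprod{X,X'}}^{2k} = \norm{\widetilde M^k_X}^2.
\]

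Next I would prove a unitary analogue of Lemma \ref{lem: orthogonality}, namely the Pythagorean identity $\norm{\widetilde M^k_X}^2 = \norm{\widetilde M^k_{X_{uni}}}^2 + \norm{\widetilde M^k_X - \widetilde M^k_{X_{uni}}}^2$, by showing orthogonality $\inprod{\widetilde M^k_X, \widetilde M^k_{X_{uni}}} = \norm{\widetilde M^k_{X_{uni}}}^2$. The argument replaces the Haar measure on $O(n)$ with the Haar measure on $U(n)$: choosing a random unitary $U$ independent of $X$ and $X_{uni}$, we have $U^*X \stackrel{d}{=} X_{uni} \stackrel{d}{=} U X_{uni}$, so
\[
\E\abs{\inprod{X,X_{uni}}}^{2k} = \E\abs{\inprod{X,U X_{uni}}}^{2k} = \E\abs{\inprod{U^*X,X_{uni}}}^{2k} = \E\abs{\inprod{X_{uni}',X_{uni}}}^{2k},
\]
and translating both ends into the tensor identity above yields the orthogonality. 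The bound \eqref{min2} then drops out of the Pythagorean identity exactly as in the real case.

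The only point requiring genuine care is the conjugate-linearity of the Hermitian inner product: one has to verify that conjugating the second factor in each pure tensor really does turn $\inprod{u,v}^k\overline{\inprod{u,v}}^k$ into a single Hermitian inner product of product tensors, and that expectations then factor over independent copies in the Hermitian (not bilinear) sense. Once this bookkeeping is in place, the Haar-unitary computation is essentially a line-for-line transcription of the orthogonal case and no new ideas are required; in particular, neither subexponentiality nor any uniqueness statement need be asserted here, since the theorem claims only the inequality.
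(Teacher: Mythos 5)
Your proposal is correct and takes essentially the same route as the paper: the mixed moment tensor $\E\bigl[X^{\otimes k}\otimes \bar X^{\otimes k}\bigr]$ is exactly the paper's $M^{2k}_X=\E\bigl[X^{\otimes k}\otimes (X^*)^{\otimes k}\bigr]$, and replacing Haar measure on $O(n)$ by Haar measure on $U(n)$ to obtain the orthogonality and Pythagorean identity is exactly the paper's argument. You spell out the conjugation bookkeeping that the paper leaves implicit, but no new ideas are introduced.
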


\begin{proof}
	By considering the moment tensors
	\[
	M^{2k}_X := \E{X^{\tensor k}\tensor (X^*)^{\tensor k}},
	\]
	we may define a complex version of eccentricity tensors. Next, we replace $Q \sim \text{Haar}\paren*{O(n)}$ with $U \sim \text{Haar}\paren*{U(n)}$ in Lemma \ref{lem: orthogonality} to prove an orthogonality result analogous to \eqref{orthogonality}. With this result, \eqref{min2} follows immediately.
\end{proof}

We are now able to complete the proof of \eqref{welch2} with the help of the following version of Lemma \ref{lem: moments of spherical marginals}.

\begin{lemma}[Moments of complex spherical marginals]
	Let $\theta$ be uniformly distributed on the complex sphere $S^{2n-1} \subset \C^n$. Then for any unit vector $v \in S^{2n-1}$ and any positive integer $k$, we have
	\begin{align} \label{cxspheremoment}
	\E{\abs*{\inprod{\theta,v}}^{2k}} = \binom{n+k-1}{k}^{-1}.
	\end{align}
\end{lemma}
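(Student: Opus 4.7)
The plan is to mirror the argument used for Lemma \ref{lem: moments of spherical marginals}, but with a standard complex Gaussian vector replacing the real one. Let $g$ be a random vector in $\C^n$ whose real and imaginary parts are independent standard Gaussian vectors in $\R^n$. Then the distribution of $g$ is unitarily invariant, so $g / \norm{g}$ is uniformly distributed on $S^{2n-1}$ and independent of $\norm{g}$. Writing $g = \norm{g}\,\theta$ and taking expectations gives the key identity
\[
\E{\abs{\inprod{g,v}}^{2k}} = \E{\norm{g}^{2k}}\cdot \E{\abs{\inprod{\theta,v}}^{2k}},
\]
so it suffices to evaluate the two expectations on the left.

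For the first, unitary invariance of $g$ means that $\inprod{g,v}$ has the same distribution as a single coordinate $g_1 = a_1 + i b_1$ with $a_1, b_1$ independent standard real Gaussians. Hence $\abs{\inprod{g,v}}^2 = a_1^2 + b_1^2$ is a $\chi^2_2$ random variable (equivalently exponential with mean $2$), and its $k$-th moment is computed by a direct Gamma integral as $2^k k!$. For the second, $\norm{g}^2 = \sum_{j=1}^n \abs{g_j}^2$ is the sum of $n$ such independent $\chi^2_2$ variables, hence $\chi^2_{2n}$-distributed, and a Gamma integral analogous to \eqref{gaussiannorm} yields
\[
\E{\norm{g}^{2k}} = \frac{2^k\,\Gamma(n+k)}{\Gamma(n)}.
\]

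Dividing the two moments gives
\[
\E{\abs{\inprod{\theta,v}}^{2k}} = \frac{2^k k!}{2^k \Gamma(n+k)/\Gamma(n)} = \frac{k!\,(n-1)!}{(n+k-1)!} = \binom{n+k-1}{k}^{-1},
\]
as desired. There is no real obstacle here: both moments are routine Gamma computations, and the only bookkeeping is to fix a single normalization convention for the complex Gaussian (so that $\abs{g_j}^2$ has the intended chi-squared distribution) and carry it through consistently.
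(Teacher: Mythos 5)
Your proof is correct and follows essentially the same route as the paper: write $\theta = g/\norm{g}$ for a standard complex Gaussian $g$, use independence of direction and radius to reduce the claim to a ratio of moments, and evaluate both by Gamma integrals. The only difference is that you carry out the $\chi^2_2$ and $\chi^2_{2n}$ computations explicitly, whereas the paper simply points back to the real Gaussian calculation in its Lemma on moments of spherical marginals; the content is the same.
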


\begin{proof}
	Let $\gamma$ and $g$ denote standard complex gaussians in 1 dimension and $n$ dimensions respectively. Then
	\[
	\E{\abs*{\inprod{\theta,v}}^{2k}} = \frac{\E{ \abs*{\gamma}^{2k}}}{\E{ \norm{g}^{2k}}}.
	\]
	Since $\abs*{\gamma}$ is the norm of a two-dimensional standard real gaussian, while $\norm{g}$ is the norm of a $2n$-dimensional standard real gaussian, \eqref{cxspheremoment} follows from the calculations of gaussian integrals done in Lemma \ref{lem: moments of spherical marginals}.
\end{proof}

\begin{remark}
	Given $Z = \braces{z_1,z_2,\ldots,z_m}$ a set of unit vectors in a Hilbert space $\mathbb{H}$, $k$ a positive integer, define the set
	\[
	Z^{(k)} = \braces{z_1^{\tensor k},z_2^{\tensor k},\ldots,z_m^{\tensor k}} \subset \text{Sym}^k\paren*{\mathbb{H}}.
	\]
	Datta et al.'s paper \cite{Datta2012a} characterized sets $Z$ achieving equality in the $k$-th Welch average cross-correlation bound \eqref{welch2} as those for which $Z^{(k)}$ forms a tight frame for $\text{Sym}^k\paren*{\mathbb{H}}$. Since our results show that this bound is not tight when $\mathbb{H}$ is a real Hilbert space and $k >1$, we have proved that tight frames of the form $Z^{(k)}$ do not exist for symmetric spaces of real tensors with $k >1$. Indeed, this also holds true for generalized frames as defined by the same authors.
\end{remark}

\begin{remark}
	Datta et al. \cite{Datta2012a} showed that the analogous statement for complex vector spaces is false. In fact, if $\theta$ is distributed uniformly on the complex sphere $S^{2n-1} \subset \C^n$, then
	\[
	\E{ \theta^{\tensor k} \tensor (\theta^*)^{\tensor k} } = \binom{n+k-1}{k}^{-1} I_{\text{Sym}^k(\C^n)}.
	\]
\end{remark}

\section{Applications to energy optimization on the sphere}

In two recent papers \cite{Bilyk2016,Bilyk2016a}, Bilyk et al. presented a theorem characterizing probability measures minimizing geodesic distance energy integrals. This is an analogue of Bj{\"o}rck's theorem from 1956 which characterized probability measures minimizing energy integrals based on Euclidean distance \cite{Bjorck1956a}. Bj{\"o}rck proved his theorem by considering Riesz potentials, while Bilyk et al. proved their result using spherical harmonic expansions and the hermisphere Stolarsky principle. In this section, we show how to derive both results using the tensorization trick and the energy minimization property of the uniform distribution on the sphere.

\begin{theorem}[Bilyk-Dai-Matzke, 2016] \label{thm: Bilyk theorem}
	For $\delta > 0$, define the geodesic energy integral
	\begin{align} \label{geodenergy}
	G_\delta(\mu) := \int_{S^{n-1}}\int_{S^{n-1}} d(x,y)^\delta d\mu(x) d\mu(y),
	\end{align}
	where $d(x,y)$ denotes the geodesic distance between $x$ and $y$. The maximizers of this energy integral over Borel probability measures on $S^{n-1}$ can be characterized as follows:
	\begin{enumerate}
		\item[a)] $0 < \delta < 1$: the unique maximizer of $G_\delta(\mu)$ is $\mu = \sigma$, the uniform measure.
		\item[b)] $\delta = 1$: $G_\delta(\mu)$ is maximized if and only if $\mu$ is centrally symmetric.
		\item[c)] $\delta > 1$: $G_\delta(\mu)$ is maximized if and only if $\mu = \frac{1}{2}(\delta_p + \delta_{-p})$, i.e. the mass is supported equally by two antipodal points.
	\end{enumerate}
\end{theorem}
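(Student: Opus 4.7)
My plan is to reduce the theorem to an assertion about power series by expanding $\arccos(t)^\delta$ in powers of $t = \inprod{x,y}$, then substituting into Corollary~\ref{cor: spherical minimization} via the tensorization identity $\E\paren*{\inprod{X,X'}^m} = \norm{M_X^m}^2$. Writing $\arccos(t)^\delta = \sum_{m \geq 0} c_m(\delta) t^m$ and interchanging sum and expectation,
\[
G_\delta(\mu) = c_0(\delta) + \sum_{m \geq 1} c_m(\delta)\, \E\paren*{\inprod{X,X'}^m},
\]
where $X,X'$ are i.i.d.\ samples from $\mu$. Three observations drive the argument: $\E\paren*{\inprod{X,X'}^m} = \norm{M_X^m}^2 \geq 0$ with equality iff $M_X^m = 0$; for even $m$ this quantity is at least $\norm{M_\theta^m}^2 = \E\paren*{\inprod{\theta,\theta'}^m}$ by Corollary~\ref{cor: spherical minimization}; and for odd $m$, $\E\paren*{\inprod{\theta,\theta'}^m} = 0$ by the central symmetry of $\sigma$. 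Thus the sign of $c_m(\delta)$ determines whether the $m$-th term pushes $G_\delta(\mu)$ above or below $G_\delta(\sigma)$.

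For parts (a) and (b), covering $0 < \delta \leq 1$, the key claim is that $c_m(\delta) \leq 0$ for every $m \geq 1$. Rewrite $\arccos(t)^\delta = (\pi/2)^\delta (1-w)^\delta$ with $w := (2/\pi)\arcsin(t)$; the generalized binomial theorem gives $(1-w)^\delta = \sum_{n \geq 0} (-1)^n \binom{\delta}{n} w^n$, and a short sign count shows $(-1)^n \binom{\delta}{n} < 0$ for every $n \geq 1$ when $0 < \delta < 1$. Since $\arcsin(t)$ has non-negative Taylor coefficients, so do $w$ and each $w^n$, whence $c_m(\delta) < 0$ for every $m \geq 1$. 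Combined with the three observations, this yields $G_\delta(\mu) \leq G_\delta(\sigma)$; equality forces $\E\paren*{\inprod{X,X'}^m} = \E\paren*{\inprod{\theta,\theta'}^m}$ for every $m \geq 1$, so every eccentricity tensor of $X$ vanishes and $\mu = \sigma$ by Theorem~\ref{thm: minimization and uniqueness theorem}. The case $\delta = 1$ degenerates: $\arccos(t) = \pi/2 - \arcsin(t)$ has only odd-index terms, so equality only forces the odd moment tensors of $X$ to vanish, which is equivalent to $\mu$ being centrally symmetric.

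Part (c), with $\delta > 1$, cannot proceed by the same expansion because the coefficients $c_m(\delta)$ become mixed in sign once $\delta$ crosses $1$ --- this sign flip is precisely the source of the phase transition. Instead, I will use the elementary pointwise bound $s^\delta \leq \pi^{\delta-1} s$ on $s \in [0, \pi]$, which is tight exactly at the endpoints $s \in \braces{0, \pi}$. Applied to $s = d(X, X')$ and combined with part (b),
\[
G_\delta(\mu) \leq \pi^{\delta-1}\, G_1(\mu) \leq \pi^{\delta-1} \cdot \frac{\pi}{2} = \frac{\pi^\delta}{2} = G_\delta\paren*{\frac{\delta_p + \delta_{-p}}{2}}.
\]
Equality in the first step forces $d(X, X') \in \braces{0, \pi}$ almost surely, meaning $\mu$ is supported on some antipodal pair $\braces{p, -p}$, while equality in the second forces $\mu$ to be centrally symmetric; together these pin $\mu$ down to $(\delta_p + \delta_{-p})/2$.

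The main technical obstacle is the sign analysis of $c_m(\delta)$ for $0 < \delta < 1$, together with a routine justification that $\E$ and $\sum$ may be interchanged. The latter reduces to the decay $\abs{\binom{\delta}{n}} = O(n^{-1-\delta})$, which gives absolute convergence of the Taylor series of $\arccos(t)^\delta$ on the closed interval $[-1, 1]$ and so licenses termwise integration via dominated convergence.
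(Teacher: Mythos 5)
Your proposal is correct and follows the same overall strategy as the paper: expand $\arccos(t)^\delta$ as a power series, convert to moments of $\inprod{X,X'}$ via tensorization, argue sign of coefficients for $0 < \delta \leq 1$, and handle $\delta > 1$ by an elementary pointwise bound. The one genuine difference is the sign analysis. The paper proves a general Lemma (\ref{lem: sign of derivatives}) about derivatives of $f^\alpha$ at $0$ when $f(0) > 0$, $f'(0) < 0$, and higher derivatives are non-positive, via an inductive bookkeeping of terms in the chain rule; it then applies this with $f = \arccos$ and $\alpha = \delta$, and \emph{reuses} the same lemma later with $f(t) = 2-2t$ to prove Bj{\"o}rck's theorem, which is what explains the unified phase-transition picture. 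You instead write $\arccos(t)^\delta = (\pi/2)^\delta(1-w)^\delta$ with $w = (2/\pi)\arcsin(t)$ and combine the generalized binomial theorem with the non-negativity of $\arcsin$'s Taylor coefficients; this is shorter and more transparent for this particular theorem, but it is tailored to $\arccos$ and would have to be redone for the Euclidean case. Your convergence justification via $\abs{\binom{\delta}{n}} = O(n^{-1-\delta})$ is a reasonable substitute for the paper's Lemma~\ref{lem: Taylor series absolute convergence on closed interval}, which argues via monotone/dominated convergence from non-negativity of the coefficients alone. One small point in your favor: in part (c) you correctly use $d(x,y) \leq \pi$ and arrive at the bound $\pi^\delta/2$ attained by the two-point mass, whereas the paper's displayed argument has a typo (writing $d(x,y) \leq \pi/2$ and $G_\delta(\mu) \leq (\pi/2)^\delta$), which understates the true maximum value.
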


\begin{proof}
	Observe that the geodesic distance $d(x,y)$ is simply the angle between $x$ and $y$. As such, we have $d(x,y) = \arccos\paren*{\inprod{x,y}}$. We may thus rewrite \eqref{geodenergy} as 
	\[
	G_\delta(\mu) = \E{ \arccos\paren*{\inprod{X,X'}}^\delta},
	\]
	where $X$ and $X'$ are independent random vectors with distribution $\mu$.
	
	Let us start by proving part b). It is an exercise to show that the even derivatives of $\arccos$ vanish at $0$, while the odd derivatives are strictly negative at $0$. For $-1 < t < 1$ may hence write $\arccos$ as its Taylor series
	\begin{align}
	\arccos\paren*{t} = \frac{\pi}{2} - \sum_{k=0}^\infty a_{2k+1}t^{2k+1}
	\end{align}
	where $a_{2k+1} > 0$ for all $k$. We claim that in fact, the above formula holds for all $t$ in the \emph{closed} interval $[-1,1]$, and furthermore that the series is absolutely convergent. This is the content of Lemma \ref{lem: Taylor series absolute convergence on closed interval} to come. As a result, we may use Fubini to interchange sums and expectations, thereby writing
	\begin{align*}
	\E{ \arccos\paren*{\inprod{X,X'}}} = \frac{\pi}{2} - \sum_{k=0}^\infty a_{2k+1}\E{ \inprod{X,X'}^{2k+1}}.
	\end{align*}
	
	Since $\E{ \inprod{X,X'}^{2k+1}} \geq 0$ for each $k$ by identity \eqref{tensorid2}, this last expression is maximized if and only if $\E{ \inprod{X,X'}^{2k+1}} = 0$ for every non-negative integer $k$. By the same identity, this happens if and only if all odd moments of $X$ are zero, i.e. if and only if $X$ is centrally symmetric. This proves the case $\delta = 1$.
	
	Now let $0 < \delta < 1$. We claim that for $-1 \leq t \leq 1$, we may write
	\begin{align}
	\arccos(t)^\delta = \paren*{\frac{\pi}{2}}^\delta - \sum_{k=1}^\infty a_kt^k
	\end{align} 
	where $a_k > 0$ for all $k > 0$, and that the series is absolutely convergent. Lemma \ref{lem: sign of derivatives} (to come) tells us that the Taylor series of $\arccos(t)^\delta$ has this form, which combined with Lemma \ref{lem: Taylor series absolute convergence on closed interval} proves this claim. As such, we may again use Fubini to write
	\begin{align}
	\E{ \arccos\paren*{\inprod{X,X'}}^\delta} = \paren*{\frac{\pi}{2}}^\delta - \sum_{k=1}^\infty a_k\E{ \inprod{X,X'}^k}.
	\end{align}
	By identity \eqref{tensorid2}, $\E{\inprod{X,X'}^k} \geq 0$ for any distribution, while by Corollary \ref{cor: spherical minimization}, the uniform measure uniquely minimizes all of these moments simultaneously. As such, we see that it is the unique maximizer of $G_\delta(\mu)$.
	
	The remaining case where $\delta > 1$ is easy and does not require a proof using our methods. For completeness, we repeat the proof given by the original authors \cite{Bilyk2016a}. Since $d(x,y) \leq \frac{\pi}{2}$, we have
	\[
	G_\delta(\mu) \leq \paren*{\frac{\pi}{2}}^{\delta -1}\int_{S^{n-1}}\int_{S^{n-1}} d(x,y) d\mu(x) d\mu(y) \leq \paren*{\frac{\pi}{2}}^{\delta}.
	\]
	The first inequality is tight whenever $d(x,y)$ only takes the values $\frac{\pi}{2}$ and $0$, while by part b), the second inequality becomes equality when $\mu$ is centrally symmetric. Together, these imply that $\mu = \frac{1}{2}(\delta_p + \delta_{-p})$ for some $p \in S^{n-1}$.
\end{proof}

\begin{lemma} \label{lem: Taylor series absolute convergence on closed interval}
	Let $f$ be a function that is continuous on $[-1,1]$ and that agrees with its Taylor series at $0$ on the open interval $(-1,1)$. Suppose further that all but finitely many of its derivatives at $0$ have the same sign. Then the series is absolutely convergent over the closed interval $[-1,1]$, and agrees with $f$ over the interval.
\end{lemma}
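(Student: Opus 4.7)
The plan is to reduce to the case where the Taylor coefficients all have the same sign, and then exploit monotonicity in $t$ together with Abel's theorem on power series.

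Write the Taylor coefficients of $f$ at $0$ as $c_k = f^{(k)}(0)/k!$. By hypothesis, there is some $N$ such that $c_k$ has the same sign (say $\geq 0$; otherwise replace $f$ by $-f$) for all $k \geq N$. Split $f = P + g$, where $P(t) = \sum_{k=0}^{N-1} c_k t^k$ is a polynomial and $g := f - P$ is continuous on $[-1,1]$ and agrees with the tail Taylor series $\sum_{k \geq N} c_k t^k$ on $(-1,1)$. Since $P$ is a polynomial its Taylor series trivially converges absolutely on $[-1,1]$ and equals $P$ there, so it suffices to prove the statement for $g$, whose Taylor coefficients $c_k$ are nonnegative for all $k \geq N$ (and zero for $k < N$).

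Next I would use the nonnegativity of the coefficients to bound partial sums. Since $g$ is continuous on the compact set $[-1,1]$, it is bounded, say by $M$. For any $0 \leq t < 1$ and any finite $K \geq N$, the partial sum $S_K(t) := \sum_{k=N}^K c_k t^k$ is at most $g(t) \leq M$ (because the series has nonnegative terms and converges to $g(t)$). Letting $t \to 1^-$, each term $c_k t^k$ increases monotonically to $c_k$, so $\sum_{k=N}^K c_k \leq M$ for every $K$. Hence $\sum_k c_k$ converges, which gives absolute convergence of the full Taylor series of $g$ on $[-1,1]$ since $\sum_k |c_k t^k| \leq \sum_k c_k$ for $|t| \leq 1$.

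Finally, to show the series agrees with $g$ at the endpoints, I would invoke Abel's theorem: since $\sum c_k$ converges, the power series $h(t) := \sum_{k \geq 0} c_k t^k$ is continuous on $[-1,1]$ (continuity at $t = 1$ and $t = -1$ is exactly Abel's theorem). We already know $h(t) = g(t)$ for $t \in (-1,1)$, and $g$ is continuous on $[-1,1]$ by construction, so taking limits $t \to 1^-$ and $t \to -1^+$ yields $h(\pm 1) = g(\pm 1)$. Adding back the polynomial $P$ gives the result for $f$. No step here is really an obstacle; the only care needed is in choosing $N$ large enough to absorb the finitely many coefficients of the wrong sign, which is where the hypothesis is used.
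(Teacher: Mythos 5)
Your proof is correct and follows essentially the same strategy as the paper: reduce to nonnegative tail coefficients by subtracting a polynomial, then use nonnegativity to show $\sum_k c_k < \infty$ (your bounded-partial-sums argument is the elementary version of the paper's appeal to monotone convergence), which gives absolute convergence on $[-1,1]$. The only difference is in recovering the endpoint values: the paper invokes the dominated convergence theorem, while you use Abel's theorem together with the continuity of $g$; both are standard and equally valid here.
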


\begin{proof}
	By subtracting off polynomials and negating the function if necessary, we may assume without loss of generality that the Taylor series for $f(t)$ is given by $\sum_{k=0}^\infty c_k t^k$ where $c_k \geq 0$ for all $k$. By the monotone convergence theorem, together with our assumptions on $f$, we have
	\[
	\sum_{k=0}^\infty c_k = \lim_{t \to 1^-}\sum_{k=0}^\infty c_k t^k = \lim_{t \to 1^-} f(t) = f(1).
	\]
	As such, the series $\sum_{k=0}^\infty c_k$ is absolutely convergent, and the Taylor series is also absolutely convergent on the closed interval $[-1,1]$. Finally, we can apply the dominated convergence theorem to see that $f(-1) = \sum_{k=0}^\infty c_k(-1)^k$.
\end{proof}

\begin{lemma} \label{lem: sign of derivatives}
	Let $f$ be a function that has derivatives of all orders at $0$ and let $0 < \alpha < 1$. Suppose $f(0) > 0$ and $f'(0) < 0$, while all higher derivatives $f$ at $0$ are non-positive, then all derivatives of $f^\alpha$ at $0$ are strictly negative.
\end{lemma}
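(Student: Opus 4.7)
The plan is to compute the Taylor coefficients of $f^\alpha$ at $0$ via a generalized binomial expansion, after reducing to an analytic situation. By Faà di Bruno's formula, the $m$-th derivative of $f^\alpha$ at $0$ depends only on $f(0), f'(0), \ldots, f^{(m)}(0)$. Hence, replacing $f$ by its $m$-th Taylor polynomial $P_m$ at $0$ does not change $(f^\alpha)^{(m)}(0)$. By hypothesis we can write
\begin{equation*}
P_m(x) = f(0) \bigl(1 - v(x)\bigr), \qquad v(x) = \sum_{k=1}^m \frac{-f^{(k)}(0)}{k!\, f(0)}\, x^k,
\end{equation*}
where $v$ is a polynomial with non-negative coefficients and strictly positive linear coefficient $b_1 = -f'(0)/f(0)$. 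Since $P_m(0) = f(0) > 0$, the function $P_m(x)^\alpha$ is real-analytic on a neighborhood of $0$, so on that neighborhood we may expand
\begin{equation*}
P_m(x)^\alpha = f(0)^\alpha \sum_{j=0}^\infty (-1)^j \binom{\alpha}{j} v(x)^j,
\end{equation*}
and the coefficient of $x^m$ on the right equals $(f^\alpha)^{(m)}(0) / m!$.

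I would then analyze the sign of the coefficient of $x^m$ (for $m \geq 1$) summand by summand. Because $v$ has no constant term, only $j = 1, 2, \ldots, m$ contribute, giving the finite sum
\begin{equation*}
\frac{(f^\alpha)^{(m)}(0)}{m!} = f(0)^\alpha \sum_{j=1}^m (-1)^j \binom{\alpha}{j}\, [x^m]\, v(x)^j.
\end{equation*}
Two simple sign facts control this sum. First, $[x^m] v(x)^j \geq 0$ because $v$ has non-negative coefficients, and in particular $[x^m] v(x)^m = b_1^m > 0$, obtained by picking the linear term of $v$ from each of the $m$ factors. Second, since $0 < \alpha < 1$, the quantity $(-1)^j \binom{\alpha}{j}$ is strictly negative for every $j \geq 1$: the numerator $\alpha(\alpha-1)(\alpha-2)\cdots(\alpha-j+1)$ has exactly one positive factor and $j-1$ negative factors, so $\binom{\alpha}{j}$ has sign $(-1)^{j-1}$.

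Combining these facts, every summand in the displayed sum is non-positive and the $j = m$ summand is strictly negative, so the whole sum is strictly negative. Multiplying by $f(0)^\alpha > 0$ yields $(f^\alpha)^{(m)}(0) < 0$ for every $m \geq 1$, which is the conclusion. The only step requiring real care is the sign count for $(-1)^j \binom{\alpha}{j}$; I don't anticipate any other obstacle, since the non-negativity of the coefficients of $v^j$ and the reduction to the polynomial $P_m$ are routine.
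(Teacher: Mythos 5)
Your proposal is correct. The underlying sign observation is the same as the paper's—namely that $\prod_{i=0}^{j-1}(\alpha-i)$ has sign $(-1)^{j-1}$ while a product of $j$ non-positive derivatives has sign $(-1)^j$, so the two signs cancel to give a non-positive term—but the route to the decomposition is genuinely different. The paper differentiates $F=f^\alpha$ directly and tracks, by induction, the $2^{k-1}$ un-collected terms produced by a binary tree of choices (differentiate the outer power, or differentiate one of the inner derivative factors); each term is shown to have the right sign and the "all $f'$" term gives strict negativity. You instead first truncate $f$ to its degree-$m$ Taylor polynomial $P_m$ (justified by observing $(f^\alpha)^{(m)}(0)$ depends only on $f^{(0)}(0),\ldots,f^{(m)}(0)$), then factor $P_m = f(0)\bigl(1-v(x)\bigr)$ with $v$ having non-negative coefficients, and apply the generalized binomial theorem. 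This reorganizes the same terms by Bell-polynomial type but makes the bookkeeping cleaner: the non-negativity of $[x^m]v^j$ is immediate, the strict negativity comes from $[x^m]v^m = \bigl(-f'(0)/f(0)\bigr)^m > 0$, and the sign of $(-1)^j\binom{\alpha}{j}$ packages the factor-counting. The tradeoff is that you invoke Faà di Bruno and analyticity of $P_m^\alpha$ where the paper stays at the level of elementary repeated differentiation; either is fine, and your version is arguably more transparent because the polynomial reduction separates the combinatorics of the expansion from the sign analysis.
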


\begin{proof}
	Let $F(t) = f(t)^\alpha$. By induction, one may observe that for any positive integer $k$, $F^{(k)}(t)$ is a sum of $2^{k-1}$ terms of the form
	\[
	g_{\vec{n}}(t) := f(t)^{\alpha-j} \paren*{\prod_{i=0}^{j-1}\paren*{\alpha-i} } \paren*{\prod_{i=0}^{j-1} f^{(n_i)}(t)},
	\]
	where $1 \leq j \leq k$, and $\vec{n} = \paren*{n_0,n_1,\ldots,n_{j-1}}$ is a vector of positive integers. If there is some index $i$ such that $f^{(n_i)}(0) = 0$, then $g_{\vec{n}}(0) = 0$. Otherwise, $\prod_{i=0}^{j-1} f^{(n_i)}(0)$ is a product of $j$ negative numbers and so has sign $(-1)^j$. On the other hand, our assumption on $\alpha$ imply that $\paren*{\prod_{i=0}^{j-1}\paren*{\alpha-i}}$ is a product of one positive number and $j-1$ negative numbers, and so has sign $(-1)^{j-1}$. As such, $g_{\vec{n}}(0) \leq 0$.
	
	Finally, notice that $F^{(k)}(0)$ always contains the term
	\[
	g_{(1,1,\ldots,1)}(0) = f(t)^{\alpha-k}\paren*{\prod_{i=0}^{k-1}\paren*{\alpha-i} } f'(0)^k.
	\]
	Since we have assumed that $f'(0) < 0$, this term is strictly negative. As such, $F^{(k)}(0)$ is also negative, as was to be shown.
\end{proof}

In the course of proving the previous theorem, we have in fact proved the following more general result.

\begin{theorem} \label{thm: energy minimization for general functions}
	Let $F$ be a function on on $[-1,1]$ that is given by the power series
	\begin{align}
	F(t) = a_0 - \sum_{k=1}^\infty a_k t^k,
	\end{align}
	where $a_k \geq 0$ for all $ k > 0$. Then the energy integral
	\begin{align} \label{energyintegral}
	I_F(\mu) := \int_{S^{n-1}}\int_{S^{n-1}}F(\inprod{x,y}) d\mu(x) d\mu(y)
	\end{align}
	is maximized over all Borel probability measures on $S^{n-1}$ by the uniform measure. Furthermore, if $a_k > 0$ for all $k > 0$, then the maximizer is unique.
\end{theorem}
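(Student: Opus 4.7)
The plan is to expand the integrand using the power series for $F$ and reduce the problem to the termwise minimization of moments already handled by Corollary \ref{cor: spherical minimization}. Fix i.i.d.\ random vectors $X, X'$ with distribution $\mu$, so that $I_F(\mu) = \E\paren{F(\inprod{X,X'})}$. The hypothesis that the series $a_0 - \sum_{k=1}^\infty a_k t^k$ represents $F$ on the \emph{closed} interval $[-1,1]$ forces $\sum_{k=1}^\infty a_k = a_0 - F(1) < \infty$; since $a_k \geq 0$, the series is then absolutely and uniformly convergent on $[-1,1]$. Fubini--Tonelli then lets me swap the sum and the expectation to write
\[
I_F(\mu) = a_0 - \sum_{k=1}^\infty a_k \, \E\paren{\inprod{X,X'}^k}.
\]

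Next, I would bound each moment from below. Identity \eqref{tensorid2} gives $\E\paren{\inprod{X,X'}^k} = \norm{M_X^k}^2 \geq 0$ for every $k \geq 1$, and the uniform measure $\sigma$ saturates this bound in the odd cases, since its odd moments vanish by central symmetry. For even $k$, Corollary \ref{cor: spherical minimization} supplies the sharper lower bound $\E\paren{\inprod{X,X'}^k} \geq \E\paren{\inprod{\theta,\theta'}^k}$, again attained by $\sigma$. Multiplying each inequality by $a_k \geq 0$ and summing gives $I_F(\mu) \leq I_F(\sigma)$, which is the desired maximization statement.

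For uniqueness, suppose $a_k > 0$ for all $k > 0$ and $\mu$ attains the maximum. Then every weighted inequality above must be tight, so in particular $\E\paren{\inprod{X,X'}^{2k}} = \E\paren{\inprod{\theta,\theta'}^{2k}}$ for every $k \geq 1$. Any random vector supported on the sphere is trivially subexponential, so the uniqueness clause of Corollary \ref{cor: spherical minimization} forces $\mu = \sigma$. The only delicate point in the argument is the Fubini step, but nonnegativity of the $a_k$'s together with convergence at the endpoint $t = 1$ makes absolute convergence automatic, so no real obstacle arises.
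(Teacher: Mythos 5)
Your argument follows essentially the same route as the paper: the paper obtains Theorem~\ref{thm: energy minimization for general functions} as a byproduct of the proof of Theorem~\ref{thm: Bilyk theorem}, expanding $F$ into its power series, invoking absolute convergence (via Lemma~\ref{lem: Taylor series absolute convergence on closed interval}) to justify Fubini, and then bounding each moment $\E\paren{\inprod{X,X'}^k}$ below term by term. Your observation that the hypothesis already forces $\sum_k a_k = a_0 - F(1) < \infty$ is a clean shortcut to the same Fubini justification, and the maximization part of your argument is correct.

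There is, however, a gap in the uniqueness step as you have written it. You extract only the even-moment equalities and hand them to the uniqueness clause of Corollary~\ref{cor: spherical minimization}, but matching even moments alone does not force $\mu = \sigma$: for instance, the uniform measure on a closed hemisphere has $M_X^{2k} = M_\theta^{2k}$ for every $k$ (since $\theta^{\tensor 2k}$ is an even function of $\theta$), hence $\E\paren{\inprod{X,X'}^{2k}} = \E\paren{\inprod{\theta,\theta'}^{2k}}$ for all $k$, yet it is obviously not uniform. (This same imprecision is present in the paper's statement of the corollary, whose proof invokes Theorem~\ref{thm: minimization and uniqueness theorem}(b), a result that requires equality in \emph{all} moments, odd as well as even.) Fortunately you have already observed that \emph{every} weighted inequality is tight; you just stopped short of using the odd ones. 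With $a_{2k+1} > 0$, tightness gives $\E\paren{\inprod{X,X'}^{2k+1}} = \norm{M_X^{2k+1}}^2 = 0$, so $M_X^{2k+1} = 0 = M_\theta^{2k+1}$; combined with $M_X^{2k} = M_\theta^{2k}$ from equation~\eqref{orthogonality}, all moment tensors of $X$ and $\theta$ agree, and Lemma~\ref{lem: subexponential RV determined by moment tensors} then gives $X \stackrel{d}{=} \theta$, i.e.\ $\mu = \sigma$. Replacing your appeal to Corollary~\ref{cor: spherical minimization}'s uniqueness clause with this two-line argument closes the gap.
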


Let us see how we may apply this more general theorem to recover Bj{\"o}rck's original result.

\begin{theorem}[Bj{\"o}rck, 1956]
	For $\delta > 0$, define the energy integral
	\begin{align} \label{eucenergy}
	E_\delta(\mu) = \int_{S^{n-1}}\int_{S^{n-1}} \norm{x-y}^\delta d\mu(x) d\mu(y).
	\end{align}
	The maximizers of this energy integral over Borel probability measures on $S^{n-1}$ can be characterized as follows:
	\begin{enumerate}
		\item $0 < \delta < 2$: the unique maximizer of $E_\delta(\mu)$ is $\mu = \sigma$, the uniform measure.
		\item $\delta = 2$: $E_\delta(\mu)$ is maximized if and only if the center of mass of $\mu$ is at the origin.
		\item $\delta > 2$: $E_\delta(\mu)$ is maximized if and only if $\mu = \frac{1}{2}(\delta_p + \delta_{-p})$, i.e. the mass is supported equally by two antipodal points.
	\end{enumerate}
\end{theorem}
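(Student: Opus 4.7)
The plan is to reduce to Theorem \ref{thm: energy minimization for general functions} by expressing the Euclidean distance energy in terms of the inner product. For $x, y \in S^{n-1}$ one has $\norm{x-y}^2 = 2 - 2\inprod{x,y}$, hence
\[
\norm{x-y}^\delta = F_\delta\paren*{\inprod{x,y}}, \qquad F_\delta(t) := 2^{\delta/2}(1-t)^{\delta/2},
\]
so that $E_\delta(\mu) = I_{F_\delta}(\mu)$. It then suffices to check the hypotheses of Theorem \ref{thm: energy minimization for general functions} for $F_\delta$ in the regime $0 < \delta < 2$, to evaluate $E_2$ directly, and to handle $\delta > 2$ by a separate comparison argument.

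For the first case, set $\alpha = \delta/2 \in (0,1)$. Direct differentiation gives
\[
F_\delta^{(k)}(0) = 2^\alpha \cdot (-1)^k \alpha(\alpha-1)(\alpha-2)\cdots(\alpha-k+1).
\]
For $k \geq 1$, the product $\alpha(\alpha-1)\cdots(\alpha-k+1)$ contains exactly $k-1$ negative factors and hence has sign $(-1)^{k-1}$, so $F_\delta^{(k)}(0) < 0$. The Taylor expansion at $0$ therefore takes the form $F_\delta(t) = F_\delta(0) - \sum_{k=1}^\infty a_k t^k$ with every $a_k > 0$. Since $F_\delta$ is continuous on the closed interval $[-1,1]$, Lemma \ref{lem: Taylor series absolute convergence on closed interval} extends absolute convergence and equality to all of $[-1,1]$, and Theorem \ref{thm: energy minimization for general functions} then yields that $\sigma$ is the unique maximizer.

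The case $\delta = 2$ is immediate from $F_2(t) = 2-2t$, which gives $E_2(\mu) = 2 - 2\norm{\E X}^2$; this is maximized exactly when the center of mass $\E X$ vanishes. For $\delta > 2$, the power series approach fails because $(1-t)^{\delta/2}$ has Taylor coefficients of mixed signs, so I would mimic the $\delta > 1$ argument in Theorem \ref{thm: Bilyk theorem}. Using $\norm{x-y} \leq 2$ we obtain
\[
E_\delta(\mu) \leq 2^{\delta-2}\int_{S^{n-1}}\int_{S^{n-1}} \norm{x-y}^2 d\mu(x) d\mu(y) = 2^{\delta-2}\paren*{2 - 2\norm{\E X}^2} \leq 2^{\delta-1}.
\]
Equality in the first inequality forces $\norm{x-y} \in \braces{0,2}$ almost surely, so $\mathrm{supp}(\mu) \subseteq \braces{p,-p}$ for some $p \in S^{n-1}$; equality in the second forces $\E X = 0$. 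Together these give $\mu = \tfrac{1}{2}\paren*{\delta_p + \delta_{-p}}$.

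The only delicate step is the sign computation for the derivatives of $(1-t)^{\delta/2}$, together with the extension of the binomial series to the endpoint $t = -1$ (the geometrically interesting case, where antipodal points contribute). Once these are in place, everything else follows directly from the machinery assembled in Sections 2 and 4.
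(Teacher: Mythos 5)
Your proof is correct and follows essentially the same route as the paper: reduce to Theorem \ref{thm: energy minimization for general functions} via $\norm{x-y}^\delta = (2-2\inprod{x,y})^{\delta/2}$, verify the one-sided sign condition on the Taylor coefficients, and handle $\delta = 2$ and $\delta > 2$ separately. The only differences are cosmetic: you compute the binomial-series derivatives of $(1-t)^{\delta/2}$ directly rather than invoking Lemma \ref{lem: sign of derivatives} (which the paper reuses from the $\arccos^\delta$ case), and you spell out the $\delta > 2$ comparison argument that the paper only sketches by reference to Theorem \ref{thm: Bilyk theorem}; you also correctly carry the factor of $2$ in $E_2(\mu) = 2 - 2\norm{\E X}^2$, which the paper drops in a harmless typo.
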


\begin{proof}
	We rewrite \eqref{eucenergy} as
	\[
	E_\delta(\mu) = \E{\norm{X-X'}^\delta}
	\]
	where $X$ and $X'$ are independent random vectors with distribution $\mu$. The easy case $\delta > 2$ is proved exactly as in Theorem \ref{thm: Bilyk theorem}. The case $\delta=2$ is also clear, for we may write $\norm{X-X'}^2 = 2 - 2\inprod{X,X'}$, and by identity \eqref{tensorid2},
	\[
	E_2(\mu) = 2 - \E{\inprod{X,X'}} = 2 - \norm{\E{X}}^2.
	\]
	This is maximized if and only if $\E{X} = 0$.
	
	For $0 < \delta < 2$, we set $f(t) = 2-2t$ and $F(t) = f(t)^{\delta/2}$. Then $f$ and $\alpha = \delta/2$ satisfy the hypotheses of Lemma \ref{lem: sign of derivatives}, so $F^{(k)}(0) < 0$ for all positive integers $k$. This, together with Lemma \ref{lem: Taylor series absolute convergence on closed interval}, implies that $F$ satisfies the hypothesis of Theorem \ref{thm: energy minimization for general functions}. Since
	\[
	E_\delta(\mu) = \int_{S^{n-1}}\int_{S^{n-1}}\paren*{2-2\inprod{x,y}}^{\delta/2} d\mu(x) d\mu(y) = \int_{S^{n-1}}\int_{S^{n-1}}F(\inprod{x,y}) d\mu(x) d\mu(y),
	\]
	we can conclude that $E_\delta(\mu)$ is uniquely maximized by the uniform measure.
\end{proof}

\begin{remark}
	In their paper \cite{Bilyk2016a}, Bilyk et al. remarked that while the Euclidean and geodesic distances are both metrics on the sphere, the phase transition for the behavior of their energy integrals is different. In the Euclidean case, Bj{\"o}rck's theorem shows that it occurs at $\delta = 2$, while in the geodesic case, Bilyk et al.'s theorem shows that it occurs at $\delta = 1$. This peculiar phenomenon is explained by our unified proof of both results.
	
	In both cases, the existence of a phase transition as we let $\delta$ decrease to $0$ is asserted by Lemma \ref{lem: sign of derivatives} and Theorem \ref{thm: energy minimization for general functions}. If the integrand satisfies the hypotheses of Lemma \ref{lem: sign of derivatives} for some $\delta_0$, then for all $0 < \delta < \delta_0$, the integrand will satisfy the hypothesis of Theorem \ref{thm: energy minimization for general functions}, from which we can conclude that the unique maximizer is the uniform measure. For the Euclidean integral, we have $\delta_0 = 2$, while for the geodesic integral, we have $\delta_0 = 1$.
\end{remark}

\begin{remark}
	Bilyk et al. were also interested in understanding continuous functions $F$ for which the uniform measure $\sigma$ is the unique minimizer of $I_F$ as defined in \eqref{energyintegral}. They managed to characterize these functions as those for which all non-constant Gegenbauer coefficients are strictly positive, i.e.
	\[
	\hat{F}(k,\lambda) > 0
	\]
	for all positive integers $k$, and where $\lambda = \frac{n}{2}-1$. On the other hand, by flipping signs, Theorem \ref{thm: energy minimization for general functions} implies that a sufficient condition for this to happen is to require all non-constant Taylor series coefficents to be strictly positive. 
\end{remark}

\section{Discussion}

After submitting the first version of this paper, I became aware that a partial version of Corollary \ref{cor: spherical minimization} was proved by Ehler and Okoudjou in \cite{Ehler2012} (see Theorem 4.10 therein). Their result gives the inequality portion of the corollary but not the uniqueness part of it. They also do not prove any other part of Theorem \ref{thm: minimization and uniqueness theorem}, which applies to more general random vectors, and for all positive integer moments (as opposed to just even integer moments).

Like Bilyk et al., Ehler and Okoudjou obtained their result using spherical harmonics, and in particular, by considering the Gegenbauer coefficients of monomial functions. This is more evidence that there should be a close relationship between the theory of eccentricity tensors and that of spherical harmonics, and it will be interesting to investigate this connection further.

\section*{Acknowledgements}

I would like to thank Dmitriy Bilyk, Han Huang, and Roman Vershynin for insightful discussions on these topics. I would also like to thank the anonymous reviewer for their helpful feedback.

\nocite{*}
\bibliographystyle{amsplain}
\bibliography{minimization_bib}

\providecommand{\bysame}{\leavevmode\hbox to3em{\hrulefill}\thinspace}
\providecommand{\MR}{\relax\ifhmode\unskip\space\fi MR }
% \MRhref is called by the amsart/book/proc definition of \MR.
\providecommand{\MRhref}[2]{%
  \href{http://www.ams.org/mathscinet-getitem?mr=#1}{#2}
}
\providecommand{\href}[2]{#2}
\begin{thebibliography}{10}

\bibitem{Alon2003}
Noga Alon, \emph{{Problems and results in extremal combinatorics I}}, Discrete
  Mathematics \textbf{273} (2003), no.~1-3, 31--53. \MR{2025940}

\bibitem{Billingsley1995a}
Patrick Billingsley, \emph{{Probability and Measure - Third Edition}}, 1995.
  \MR{2893652}

\bibitem{Bilyk2016}
Dmitriy Bilyk and Feng Dai, \emph{{Geodesic distance Riesz energy on the
  sphere}},  (2016), Preprint, available at
  \url{http://arxiv.org/abs/1612.08442}.

\bibitem{Bilyk2016a}
Dmitriy Bilyk, Feng Dai, and Ryan Matzke, \emph{{Stolarsky principle and energy
  optimization on the sphere}},  (2016).

\bibitem{Bjorck1956a}
G{\"{o}}ran Bj{\"{o}}rck, \emph{{Distributions of positive mass, which maximize
  a certain generalized energy integral}}, Arkiv f{\"{o}}r matematik \textbf{3}
  (1956), no.~3, 255--269. \MR{0078470}

\bibitem{Cnlar2011a}
E~{\c{C}}inlar, \emph{{Probability and Stochastics}}, Graduate Texts in
  Mathematics, vol. 261, Springer, New York, 2011. \MR{2767184}

\bibitem{Datta2012a}
S.~Datta, S.~Howard, and D.~Cochran, \emph{{Geometry of the Welch bounds}},
  Linear Algebra and Its Applications \textbf{437} (2012), no.~10, 2455--2470.
  \MR{2964699}

\bibitem{Donoho2003}
D.~L. Donoho and M.~Elad, \emph{{Optimally sparse representation in general
  (nonorthogonal) dictionaries via $l^1$ minimization}}, Proceedings of the
  National Academy of Sciences \textbf{100} (2003), no.~5, 2197--2202.
  \MR{1963681}

\bibitem{Ehler2012}
M.~Ehler and K.~A. Okoudjou, \emph{{Minimization of the probabilistic p-frame
  potential}}, Journal of Statistical Planning and Inference \textbf{142}
  (2012), no.~3, 645--659. \MR{2853573}

\bibitem{Venkov}
B.~Venkov, \emph{{R{\'e}seaux et designs sph{\'e}riques}}, R{\'e}seaux
  Euclidiens, Designs Sph{\'e}riques et Formes Modulaires, Monogr. Enseign.
  Math. 37, Enseignement Math., G{\`e}neve, 2001. \MR{1878745}

\bibitem{Vershynin2011a}
Roman Vershynin, \emph{{Introduction to the non-asymptotic analysis of random
  matrices}}, Compressed Sensing (Yonina~C. Eldar and Gitta Kutyniok, eds.),
  Cambridge University Press, Cambridge, 2011, pp.~210--268. \MR{2963170}

\bibitem{Welch1974a}
L.~Welch, \emph{{Lower bounds on the maximum cross correlation of signals}},
  IEEE Transactions on Information Theory \textbf{20} (1974), no.~3, 397--399.

\end{thebibliography}

\end{document}